\documentclass[11pt, reqno]{amsart}
\usepackage[cp1251]{inputenc}
\usepackage[english]{babel}
\usepackage{amssymb}
\usepackage{amsfonts}
\usepackage{amsmath}
\usepackage{mathtools}
\usepackage{amsthm}
\usepackage{xcolor}
\usepackage{hyperref}
\usepackage{amsaddr}
\usepackage{enumitem}
\usepackage{graphicx}
\usepackage{comment}
\usepackage[noabbrev]{cleveref}
\usepackage[left=1in,right=1in,top=1in,bottom=1in]{geometry}

\usepackage{array,tabularx,tabulary,booktabs}
\usepackage{longtable}
\usepackage{multirow}
\usepackage{wrapfig}
\usepackage{subcaption}
\usepackage[table]{xcolor}
\usepackage{soul}

\newtheorem{theorem}{Theorem}
\newtheorem{corollary}{Corollary}
\newtheorem{proposition}{Proposition}
\newtheorem{problem}{Problem}

\newtheorem{conjecture}{Conjecture}

\DeclareMathOperator{\AG}{AG}
\DeclareMathOperator{\PG}{PG}
\DeclareMathOperator{\PQ}{PQ}

\DeclareMathOperator{\PGammaL}{P\Gamma L}
\DeclareMathOperator{\Aut}{Aut}

\newcommand{\ml}{\mathcal L}
\newcommand{\D}{\mathcal D}

\newcommand{\A}{\mathcal A}
\newcommand{\Z}{\mathbb Z}
\newcommand{\F}{\mathbb F}

\begin{document}

\title{Erd\H{o}s-Ko-Rado properties of Steiner 2-designs}

\author[S.~Adriaensen, S.~Goryainov, E.~V.~Konstantinova, and V.~Kr\v{c}adinac]{Sam Adriaensen$^2$, Sergey Goryainov$^1$, 
Elena V.\ Konstantinova$^{1,3,4}$, and Vedran Kr\v{c}adinac$^5$}


\address{$^1$School of Mathematical Sciences, Hebei International Joint Research Center for Mathematics and Interdisciplinary Science, Hebei Key Laboratory of Computational Mathematics and Applications, Hebei Workstation for Foreign Academicians, Hebei Normal University, Shijiazhuang 050024, P.R. China}
\address{$^2$Department of Mathematics and Data Science, Vrije Universiteit Brussel, Brussels, Belgium}
\address{$^3$Sobolev Institute of Mathematics, Ak. Koptyug av. 4, Novosibirsk, 630090, Russia}
\address{$^4$Novosibirsk State University, Pirogova str. 2, Novosibirsk, 630090, Russia}
\address{$^5$University of Zagreb Faculty of Science, Horvatovac 102a, 10000 Zagreb, Croatia}

\email{Sam.Adriaensen@vub.be}
\email{sergey.goryainov3@gmail.com}
\email{e\_konsta@math.nsc.ru}
\email{vedran.krcadinac@math.hr}

\keywords{Steiner $2$-design; Erd\H{o}s-Ko-Rado theorem; maximal arc}

\subjclass[2010]{05E30, 05B05}

\date{September 22, 2026}

\begin{abstract}
In this paper, we prove an Erd\H{o}s-Ko-Rado characterisation of maximum intersecting families of blocks in Steiner $2$-designs arising from Desarguesian maximal arcs. This answers a recent question of Goryainov and Konstantinova, and implies that, among the known Steiner $2$-designs, only finitely many admit a maximum intersecting family that is neither canonical nor associated with a subdesign. 
We also perform a computational study of $2$-$(120,8,1)$ designs and find strong counterexamples to a problem of Godsil and Meagher. 
Finally, we give a parametric generalisation of $2$-$(66,6,1)$ designs
with tight dual arcs as non-canonical maximum intersecting families.
\end{abstract}

\maketitle


\section{Introduction}

In \cite{EKR61}, Erd\H{o}s, Ko, and Rado published their celebrated result on maximum intersecting families of $k$-subsets of an
$n$-element set, which we will abbreviate as the EKR theorem. Since then, a lot of research has been devoted to establishing analogues
of this theorem in a variety of algebraic and combinatorial settings. Let us mention some references in chronological order: \cite{Rands82},
\cite{PSV11}, \cite{GM15}, \cite{YLW22}, \cite{A22}, \cite{GK24}, \cite{GY24}, and \cite{LT24}.

A $2$-$(v,k,1)$ \emph{design} $\D$ is a collection $B$ of $k$-subsets (\emph{blocks}) of a $v$-set $V$ of \emph{points}, with the
property that every pair of points is contained in exactly one block. The parameters are sometimes written as $2$-$(v,b,r,k,1)$,
where $b=\frac{v(v-1)}{k(k-1)}$ is the total number of blocks, and $r=\frac{v-1}{k-1}$ is the number of blocks through a fixed point.
Two distinct blocks of $\D$ must have intersection of size~$0$ or~$1$. An \emph{intersecting family} of blocks is a subset
$Y\subseteq B$ such that any two blocks in $Y$ have intersection of size~1. The EKR-type question is: what is the largest possible
size of such a family~$Y$? Clearly, if we take the collection of all blocks containing a fixed point, we will have a family of size~$r$.
An EKR-type theorem for Steiner $2$-designs would state that this is the largest possible intersecting family of blocks, and determine
the conditions when the only families of maximum size are the sets of all blocks through a fixed point. The study of such questions dates
back to the work of Rands~\cite{Rands82} for general designs, and the case of Steiner $2$-designs received extra attention in the book of
Godsil and Meagher \cite[Chapter 5]{GM15}.

The \emph{block graph} of a $2$-$(v,k,1)$ design $\D=(V,B)$ is the graph with the blocks $B$ as vertices, in which two blocks are adjacent
if they have intersection of size~$1$. The block graph will be denoted by $X_{\D}$. There is a one-to-one correspondence between cliques
in~$X_{\D}$ and intersecting families of blocks in~$\D$. By Fisher's inequality, the number $b$ of blocks is at least $v$,
which is equivalent to $v\ge k^2-k+1$ for Steiner $2$-designs. Equality is reached for projective planes, where any two blocks have
non-empty intersection, and~$X_{\D}$ is the complete graph. To avoid this trivial case, we consider designs with $v>k^2-k+1$, 
or equivalently $v\ge k^2$.

A simple combinatorial argument shows that the block graph of a $2$-$(v,k,1)$ design is strongly regular (see \cite[Theorem 5.3.1]{GM15}).
The Delsarte bound~\cite{PD73} gives $r=\frac{v-1}{k-1}$ as the maximum size of cliques in $X_\D$, and thus establishes the EKR-bound for
Steiner $2$-designs. The maximum cliques consisting of all blocks through a given point are called \emph{canonical}.
In \cite[Theorem~5.1]{D15} and \cite[Corollary 5.3.5]{GM15}, it was proved that all maximum cliques in the block graph of a
$2$-$(v,k,1)$ design are canonical if $v > k^3 -2k^2+2k$ (the result was already stated in~\cite{Rands82}). However, for
\begin{equation}\label{ineq:CharacterisationFailsDesigns}
k^2\le v \le k^3 -2k^2+2k
\end{equation}
the characterisation may fail. Designs with non-canonical maximum cliques are easy to find when either
the left or the right inequality is attained.

For $v=k^2$, the design is an affine plane with $b=k(k+1)$ blocks partitioned into~$k+1$ parallel classes of size~$k$.
By choosing one block from each parallel class, one gets an intersecting family of maximum size $r=k+1$. Hence, the
block graph contains exactly $k^{k+1}$ maximum cliques, of which $k^2$ are canonical. This example is analogous
to the case $n=2k$ of the classic EKR theorem, where the $N=\binom{2k}{k}$ subsets are partitioned into disjoint pairs,
and there are exactly $2^{N/2}$ maximum intersecting families.

Let $\PG_d(n,q)$ be the design of points and $d$-dimensional subspaces as blocks in the $n$-dimensional projective
space over the finite field~$\F_q$. Then, $\D=\PG_1(3,q)$ is a $2$-$(q^3+q^2+q+1,q+1,1)$ design satisfying $v = k^3 -2k^2+2k$ with non-canonical maximum cliques in its block graph. Indeed, any
subplane of $\PG_1(3,q)$ contains $r=q^2+q+1$ blocks, any two of which have a non-empty intersection. Thus, there are $q^3+q^2+q+1$ non-canonical maximum cliques in~$X_\D$, as many as there are canonical ones.


De Boeck \cite[Corollary 5.6]{D15} improved the bound $v > k^3 -2k^2+2k$ for designs in which all maximum 
cliques are canonical. He proved that if~$\D$ is a $2$-$(v,k,1)$ design with $k \ge 4$ and
\begin{equation}\label{DeBoeckGE}
v \ge k^3-4 k^2+\frac{3}{4} \sqrt{k}(k-1)+5 k-1,
\end{equation}
then all maximum cliques in $X_\D$ are canonical, unless $v = k^3-2k^2+2k$ or~$\D$ is a
$2$-$(25,4,1)$ design. All $2$-$(25,4,1)$ designs are known~\cite{ES96}, and none of them have
non-canonical maximum cliques. For $k=3$, only $v=13$ is admissible
and satisfies strict inequality in~\eqref{ineq:CharacterisationFailsDesigns}. There are two $2$-$(13,3,1)$ designs up to isomorphism, and both have only canonical maximum cliques. Thus, $2$-$(v,k,1)$ designs satisfying~\eqref{DeBoeckGE} may have non-canonical cliques only for $v = k^3-2k^2+2k$.

Notice that there are designs with $v = k^3-2k^2+2k$ different from $\PG_1(3,q)$.
Permuting the points of a subplane of $\PG_1(3,q)$ gives a design with the same parameters, which is generally not
isomorphic~\cite[Lemma~1.1]{JT10}. Furthermore, designs with parameters $2$-$(q^3+q^2+q+1,q+1,1)$ exist
for some~$q$ that are not prime powers. For example, $2$-$(259,7,1)$ designs exist according to
\cite[Table~II.3.6 on p.~73]{CD07}. By~\cite[Exercise 5.7]{GM15}, any non-canonical maximum clique in a
$2$-$(v,k,1)$ design with $v = k^3-2k^2+2k$ comes from a subdesign with parameters $2$-$(k^2-k+1,k,1)$,
which is a projective plane of order $q=k-1$. In particular, $2$-$(259,7,1)$ designs only have
canonical maximum cliques because projective planes of order~$6$ do not exist. Godsil and Meagher
posed the following problem in their book.

\begin{problem}[{\cite[Problem 16.3.2]{GM15}}]\label{prob:GMNonCanonical}
When the block graph of a design  has maximum cliques
that are not canonical, are the non-canonical cliques isomorphic to smaller designs?
\end{problem}

Recently, it was found in~\cite{GK24} that a $2$-$(66,6,1)$ design constructed by Denniston~\cite{D80}
gives a negative answer to Problem~\ref{prob:GMNonCanonical}. For a maximum clique~$Y$ in~$X_\D$,
we define an incidence structure~$\ml(Y)$ where the points are blocks of~$Y$ and the lines are points of~$\D$ contained in at least two blocks of~$Y$. Since any two blocks of~$Y$ intersect in one point, 
any two points of~$\ml(Y)$ are incident with one line; therefore, $\ml(Y)$ is a finite linear space.
If~$Y$ is canonical, then~$\ml(Y)$ is the trivial linear space with all points on a single line.
For the non-canonical maximum cliques in the $2$-$(66,6,1)$ design~\cite{D80}, $\ml(Y)$ is
either a $2$-$(13,4,1)$ design (a projective plane of order~$3$, which is self-dual), or a $2$-$(13,3,1)$
design. The $2$-$(13,3,1)$ design is not a subdesign in the strict sense, because it is dually embedded
in the $2$-$(66,6,1)$ design (points of the small design are blocks of the large design and vice
versa). Hence, this design is a ``weak counterexample'' for Problem~\ref{prob:GMNonCanonical}.
If the problem is interpreted in a way that includes isomorphism
and duality, then it can still be considered open: so far there have been no ``strong 
counterexamples''. Barring affine planes, the following problem is open in either sense.

\begin{problem}[{\cite[Problem 2]{GK24}}]\label{prob:NonCanonicalFamiliesWithoutDesignStructure}
Does there exist an infinite family of $2$-designs whose block graphs
have non-canonical maximum cliques without a design structure?
\end{problem}

In \cite[Table~II.3.3 on p.~72]{CD07}, the known existence results for 2-$(v,k,1)$ designs with $k \le 9$ are given.
For $k \ge 10$, much less is known. However, as was discussed above, non-canonical maximum cliques in the block
graph of a 2-$(v,k,1)$ design may exist only if $v = k^3-2k^2+2k$ or
\begin{equation}\label{DeBoeckLT}
k^2 < v < k^3-4 k^2+\frac{3}{4} \sqrt{k}(k-1)+5 k-1,
\end{equation}
again barring affine planes. This means that for a fixed value of $k$, there can exist only finitely 
many $2$-$(v,k,1)$ designs with non-canonical maximum cliques. For $v = k^3-2k^2+2k$, we know that all non-canonical maximum cliques come from subdesigns~\cite[Exercise 5.7]{GM15}.

According to \cite[Theorem~II.5.11 on p.~103]{CD07}, there are only four families of parameters for which
Steiner $2$-designs are known to exist for infinitely many values of~$k$. Among them, only certain 
Denniston designs~\cite{D69} with parameters of the form
$$2\mbox{-}(2^{s+t}+2^s-2^t,2^s,1), \kern 2mm 2 \le s < t$$
satisfy inequality~\eqref{DeBoeckLT}. It can be shown by direct substitution that the 
inequality holds if and only if $t < 2s$. We thus consider the following open problem.

\begin{problem}[{\cite[Problem 3]{GK24}}]\label{prob:DennistonDesigns}
Does there exist a Denniston design whose block graph has a non-canonical maximum clique?
\end{problem}

Note that Denniston designs come from maximal arcs in the Desarguesian projective planes $\PG(2,2^t)$.
Apart from Denniston's maximal arcs~\cite{D69}, there are other constructions of maximal arcs 
in $\PG(2,2^t)$, e.g.~\cite{M02}. In this paper, we solve Problem~\ref{prob:DennistonDesigns}
by proving that designs coming from maximal arcs in Desarguesian projective planes contain
only canonical maximum cliques. Furthermore, we provide strong counterexamples
for Problem~\ref{prob:GMNonCanonical}, i.e.\ Steiner $2$-designs with non-canonical maximal 
cliques~$Y$ such that the linear space $\ml(Y)$ has non-constant point and line degrees,
and therefore $Y$ is neither a subdesign nor a dual subdesign. Problem~\ref{prob:NonCanonicalFamiliesWithoutDesignStructure}
remains open, but we suggest a family of suitable parameters that could provide infinitely 
many weak counterexamples.

The paper is organised as follows. In Section \ref{sec:ProofThm1}, we give preliminary 
definitions and results about arcs in finite projective planes. We prove 
Theorem~\ref{thm:EKRDesarguesianArcs}, leading to the EKR-characterisation of maximum 
cliques in Denniston designs. In the sequel, we perform a computational study of the smallest
non-trivial case, which occurs for order~$16$. All maximal arcs in the~$22$ known projective 
planes of order~$16$ give rise to designs without non-canonical maximum cliques, suggesting 
that the conclusion of Theorem~\ref{thm:EKRDesarguesianArcs} could hold even for 
non-Desarguesian planes. We find an example of a $2$-$(120,8,1)$ design with non-canonical 
maximum cliques~$Y$ which is not embedded in a projective plane of order~$16$.
We construct many more such designs and provide detailed statistics for the linear 
spaces~$\ml(Y)$, giving strong counterexamples for Problem~\ref{prob:GMNonCanonical}.

In Section \ref{sec:TightDualArcs}, we give a concise description of 
three $2$-$(66,6,1)$ designs and their non-canonical maximum cliques.
It turns out that the non-canonical maximum cliques stem from tight 
dual arcs, which we survey briefly and prove a new characterisation of
in Proposition~\ref{TightArcs}. We give a series of admissible parameters~\eqref{eqseries}
for Steiner $2$-designs $\D$ with tight dual arcs as non-canonical maximum 
cliques, involving two arbitrary integers~$s$ and~$t$. These designs
exist for $s=2$ and infinitely many values of~$t$, but in this case
$\D$ is an affine plane. The smallest non-trivial case are the
$2$-$(66,6,1)$ designs, occurring for $s=3$ and $t=2$. Existence
remains an open question for other values of $s\ge 3$ and~$t$, 
but this series of parameters could potentially hold an infinite 
family of examples for Problem~\ref{prob:NonCanonicalFamiliesWithoutDesignStructure}.

\section{Maximum cliques in Denniston designs}\label{sec:ProofThm1}

We start by reviewing basic definitions and results about arcs in finite projective planes. Proofs can be found in \cite[Chapter VIII.5]{BJL99} and in~\cite{JH98}. A $(n,d)$-\emph{arc} ($n, d > 1$) in a finite projective plane $\pi$ (not necessarily Desarguesian) is a set $\A$ of $n$ points of $\pi$ such that each line intersects $\A$ in at most $d$ points, and there is at least one line that does intersect $\A$ in $d$ points. The number~$d$ is called the \emph{degree} of~$\A$. The size of an arc in a projective plane of order $q$ is bounded by $n\le 1+(q+1)(d-1)$. When equality occurs, one calls~$\A$ a \emph{maximal arc}, and each line of $\pi$ meets~$\A$ in $0$ or $d$ points. The former lines are called \emph{exterior lines} and the latter are called \emph{secants} of~$\A$. Moreover, if $d<q+1$, then $d$ must divide $q$. Writing $q=d d'$, the exterior lines of~$\A$ form a maximal $(n',d')$-arc in the dual plane~$\pi'$, called the \emph{dual arc} of~$\A$ and denoted by~$\A'$. In the special case $d=2$, maximal arcs are called \emph{hyperovals}. The points of a maximal arc $\A$, and the intersections of $\A$ with its secants as blocks, give rise to a 2-$(n,d,1)$ design.


The following theorem gives a characterisation of lines in $\PG(2,q)$.
\begin{theorem}[{\cite{BW87}}]
 Let $L$ be a set of $q+1$ points in $\PG(2,q)$.
 Suppose that there are at least $q$ points $P \notin L$ such that every line through $P$ meets $L$ in a unique point.
 Then, $L$ is the set of points of a line.
\end{theorem}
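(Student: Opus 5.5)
The statement is the Blokhuis--Wilbrink characterisation, and the plan is algebraic (Rédei-polynomial style) rather than purely combinatorial. Call a point $P\notin L$ \emph{good} if every line through $P$ meets $L$ in exactly one point, and let $G$ be the set of good points, so $|G|\ge q$. First I would record the elementary reformulation. A point $P\notin L$ is good if and only if it lies on no secant of $L$, i.e.\ no line through two points of $L$. In that case the projection from $P$ is a bijection $L\to\{\text{lines through }P\}\cong \PG(1,q)$. So if $L$ is not a line, every good point avoids all the (many) secants of $L$, and the goal becomes: show that fewer than $q$ points off $L$ avoid all secants. For example, an oval with $q$ even has only its nucleus.

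Second, I would translate goodness into polynomial identities. Write $L=\{(a_i:b_i:c_i)\}_{i=1}^{q+1}$. For a point $u=(x:y:z)$, the projection from $u$ sends each point of $L$ to a point of $\PG(1,q)$, computed by $2\times 2$ minors that are linear in $(x,y,z)$. A multiset of $q+1$ points of $\PG(1,q)$ is the whole line exactly when the product of the corresponding linear forms in $(S,T)$ equals $c\,(S^qT-ST^q)$. Comparing coefficients gives polynomial equations in $(x,y,z)$. Up to a normalising factor, each coefficient of
\[
R_u(S,T)=\prod_{i=1}^{q+1}\bigl(\text{linear form in }S,T\text{ determined by }u\text{ and the }i\text{th point}\bigr)
\]
is a homogeneous polynomial $F_j(x,y,z)$ of degree $q+1$. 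Every good point must lie on the curve $F_j=0$ for each $j$ with $2\le j\le q-1$ (the middle coefficients must vanish). Points of $L$ also satisfy these equations, because there the product contains a factor that vanishes identically.

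Third, I would exploit the degrees. The common zero set of the $F_j$ contains $L\cup G$, which has at least $2q+1$ points. I would apply Bézout, or a more careful counting of zeros on lines, to show that some $F_j$ either vanishes identically or forces $L\cup G$ to lie on a curve of low degree, ideally on the union of two lines. If the $F_j$ vanish identically, every point outside $L$ would be good, and it is easy to see that this forces $L$ to be a line. In the remaining case I would argue that the good points are collinear on a line $m$. A short count of the lines through two good points then shows that $L$ meets each line through a good point once, in a way compatible only with $L$ being a line.

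The main obstacle is this third step: extracting from the vanishing of the middle coefficients a genuine geometric constraint when there are only $q$ good points. A naive Bézout bound of $(q+1)^2$ is far too weak. I would first try choosing coordinates so that one good point is $(0:1:0)$ and one secant-free direction is normalised. This reduces to a Rédei-type polynomial in one affine variable, where the lacunary-polynomial theory (as in the direction problem) bounds the number of exceptional points and yields the threshold $q$.
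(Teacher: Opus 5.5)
The paper does not prove this statement; it quotes it from Blokhuis--Wilbrink. Your proposal therefore has to stand on its own, and it has a genuine gap. Steps 1 and 2 are correct reformulations. Step 3, the passage from ``at least $q$ good points'' to ``$L$ is a line'', is the whole theorem, and you only describe it as something you would try.

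The tools you name cannot do it as set up:

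\begin{itemize}
\item Bézout at degree $q+1$ is vacuous, as you note.
\item The common zero set of the middle coefficients $F_2,\dots,F_{q-1}$ contains spurious points. To land in $\PG(1,q)$ uniformly in $u$, you must fix two combinations of the three minors, i.e.\ intersect $uw_i$ with a fixed line $m$. For $u\in m$ all images coincide, so the product is a multiple of $(\beta S-\alpha T)^{q+1}=(\beta^qS^q-\alpha^qT^q)(\beta S-\alpha T)$. Its middle coefficients vanish in characteristic $p$. So no count of common zeros can isolate the good points.
\item For the same reason, ``the $F_j$ vanish identically'' does not make every point off $L$ good. You also need $F_0=F_{q+1}=0$ and $F_1=-F_q\neq 0$.
\item Your planned intermediate conclusion is too weak. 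Take $R\notin m$, $w\in m$ and $L=(m\setminus\{w\})\cup\{R\}$. The good points are exactly the $q-1$ points of $Rw\setminus\{R,w\}$, which are collinear, yet $L$ is not a line. So ``good points collinear'' plus a tangent count cannot finish; the argument must produce the sharp bound $q-1$.
\end{itemize}

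The missing idea is a normalisation that brings the degree down to $q-1$ in a single variable, namely the good point.

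First, if $L$ is not a line, it has an exterior line. Indeed, if a $(q+1)$-set meets every line, every outside point is good. Then every secant lies inside the set, which forces it to be a line.

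Take this exterior line as the line at infinity; no good point lies on it, since that line misses $L$. Identify $\AG(2,q)$ with $\F_{q^2}$. The direction of the line through $x$ and $b$ is encoded by $(b-x)^{q-1}$, which is a $(q+1)$-st root of unity. A point $x\notin L$ is good exactly when the $q+1$ values $(b-x)^{q-1}$, $b\in L$, are distinct, i.e.\ are all the $(q+1)$-st roots of unity, whose sum is $0$.

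Hence every good point is a root of $f(x)=\sum_{b\in L}(b-x)^{q-1}$. The coefficient of $x^{q-1}$ in $f$ is $(q+1)(-1)^{q-1}=1$ in characteristic $p$. So $f\neq 0$ has degree $q-1$, and there are at most $q-1$ good points.

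Your final sentence points towards a one-variable Rédei-type polynomial, but with the wrong normalisation. The unknown should be the good point, not a direction, and only the first power sum is needed; no lacunary-polynomial theory is required.
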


The following important corollary immediately follows.

\begin{corollary}\label{Crl:Blokhuis}
  Let $\ml$ be a set of $q+1$ lines in $\PG(2,q)$.
 Suppose that there are at least $q$ lines $\ell \notin \ml$ such that every line of $\ml$ intersects $\ell$ in a different point.
 Then $\ml$ consists of the set of lines through some point.
\end{corollary}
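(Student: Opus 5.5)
The corollary is the dual of the theorem of Blokhuis and Wilbrink \cite{BW87}, so I would simply dualise. The key fact is that $\PG(2,q)$ is self-dual. Fix a duality (correlation) $\delta$, for instance the standard polarity $\langle x\rangle \mapsto x^\perp$ with respect to a nondegenerate symmetric bilinear form on $\F_q^3$. It maps points to lines and lines to points, reverses incidence, and sends $\PG(2,q)$ onto itself. With it, the statement about a set of lines $\ml$ turns into a statement about a set of points, which is exactly the shape of the theorem.

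Concretely, set $L=\delta(\ml)$. This is a set of $q+1$ points of $\PG(2,q)$. Each line $\ell\notin\ml$ maps to a point $P=\delta(\ell)\notin L$. The lines through $P$ are precisely the images under $\delta$ of the points on $\ell$.

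The hypothesis says the $q+1$ lines of $\ml$ meet $\ell$ in $q+1$ pairwise distinct points. Since $\ell$ has exactly $q+1$ points, each point of $\ell$ lies on exactly one line of $\ml$. Dually, each line through $P$ contains exactly one point of $L$. So we obtain at least $q$ points $P\notin L$ such that every line through $P$ meets $L$ in a unique point.

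The theorem of \cite{BW87} then says $L$ is the point set of a line $m$. Applying $\delta^{-1}$, $\ml$ is the set of all $q+1$ lines through the point $\delta^{-1}(m)$.

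The only step needing care is the translation of the hypothesis. "Every line of $\ml$ meets $\ell$ in a different point" must be read as: the map $\ml\to\ell$ given by intersection is injective, hence bijective by counting. This in turn requires that no line of $\ml$ equals $\ell$, which holds since $\ell\notin\ml$. Everything else is a formal consequence of incidence reversal under the duality, so I expect no real obstacle.
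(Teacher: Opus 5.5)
Your proposal is correct and is exactly the dualisation the paper has in mind when it says the corollary follows immediately from the theorem of \cite{BW87}. Your counting step is the only translation needed: since $\ell \notin \ml$ and the $q+1$ intersection points are distinct, each point of $\ell$ lies on exactly one line of $\ml$, which dualises to the hypothesis of that theorem.
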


We now prove our main result, giving a negative answer to Problem~\ref{prob:DennistonDesigns}.

\begin{theorem}\label{thm:EKRDesarguesianArcs}
 Let $\A$ be a maximal arc in $\PG(2,q)$. Then the largest intersecting families of blocks in the $2$-design obtained from $\A$ are the canonical ones.
\end{theorem}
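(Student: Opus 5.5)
Let $\A$ be a maximal $(n,d)$-arc in $\PG(2,q)$ with $q=dd'$ and $d<q$ (the case $d=q$ gives an affine plane, which we exclude as in the introduction). The blocks of the design are the secants of $\A$, and two blocks intersect in the design exactly when the corresponding secants meet in a point of $\A$. The design has $r=(n-1)/(d-1)=q+1$ blocks through each point, so a maximum intersecting family $Y$ is a set $\ml$ of $q+1$ secants. Any two of them meet in a point of $\A$. The plan is to apply Corollary~\ref{Crl:Blokhuis} to $\ml$, taking as the $q$ auxiliary lines suitable exterior lines of $\A$. We then conclude that $\ml$ is a pencil of lines through a point $P$. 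Since two lines of $\ml$ meet in a point of $\A$, this point $P$ lies in $\A$, and the $q+1$ lines of $\ml$ are all lines through $P$, which are all secants. Hence $Y$ is canonical.

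The key step is to check the hypothesis of Corollary~\ref{Crl:Blokhuis}. Let $\ell$ be any exterior line of $\A$; clearly $\ell\notin\ml$. Suppose two distinct lines $m_1,m_2\in\ml$ met $\ell$ in the same point $Q$. Then $m_1\cap m_2=Q$, but $m_1\cap m_2$ is a point of $\A$ while $Q\in\ell$ is not in $\A$, a contradiction. So every line of $\ml$ meets $\ell$ in a different point. Thus every exterior line of $\A$ is an admissible auxiliary line, and we only need at least $q$ exterior lines.

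The number of exterior lines is the size of the dual maximal arc $\A'$, namely $n'=1+(q+1)(d'-1)$. We can also count it directly: the number of secants is $n(q+1)/d$, which we subtract from $q^2+q+1$. Since $d<q$ we have $d'\ge 2$, so $n'\ge q+2>q$, and the hypothesis holds. (If $d=q+1$ there are no exterior lines, but then $\A$ is the whole plane, which is a projective plane, and that case is excluded.)

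I expect no real obstacle beyond making sure the degenerate cases are handled. These are $d=q$, the affine plane, where the statement genuinely fails and must be excluded by the standing convention $v\ge k^2+1$, and $d=q+1$. The only substantial input is the Blokhuis--Wilbrink characterisation via Corollary~\ref{Crl:Blokhuis}, which is where the Desarguesian hypothesis enters.
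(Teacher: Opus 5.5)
Your proposal is correct and is essentially the paper's proof. Every exterior line of $\A$ meets the lines of $\ml$ in pairwise distinct points, the dual arc supplies $1+(q+1)(d'-1)\ge q+2$ such lines, and Corollary~\ref{Crl:Blokhuis} then forces $\ml$ to be a pencil through a point of $\A$. Your explicit exclusion of $d=q$ (where $d'=1$, there is only one exterior line, and the design is $\AG(2,q)$) states an assumption that the paper uses only tacitly when it asserts there are at least $q+2$ exterior lines.
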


\begin{proof}
Let $\A$ be a maximal $(n,d)$-arc in $\PG(2,q)$.
 Take an intersecting family of size $q+1$ in the associated 2-$(n,d,1)$ design, and let $\ml$ denote the lines of $\PG(2,q)$ corresponding to the blocks of this intersecting family.
 We need to prove that there exists some point $P$ such that $\ml$ consists exactly of the lines going through $P$.
 Since $\ml$ represents an intersecting family, this implies that $P \in \A$ and the result follows.

 Take a line $\ell$ exterior to $\ml$.
 Then $\ell$ must intersect every line of $\ml$ in a distinct point.
 Otherwise, there would be two lines $\ell_1, \ell_2 \in \ml$ that intersect in a point $R \in \ell$.
 But since $\ell$ is exterior to $\A$, this contradicts that $\ell_1$ and $\ell_2$ represent intersecting blocks of the design.
 Since the exterior lines to $\A$ form a maximal arc in the dual plane of $\PG(2,q)$ of degree $q/d$, there are at least $q+2$ exterior lines.
 Therefore, $\ml$ meets the conditions of \Cref{Crl:Blokhuis}, and must thus consist of the $q+1$ lines through some point $P$.
\end{proof}

In~\cite{PRS96}, all hyperovals in the 22 known projective planes of order $16$ were determined.
The dual of each hyperoval gives rise to a 2-(120,8,1) design; a total of 93 non-isomorphic
designs $\D_1,\ldots,\D_{93}$ are obtained. Using the computer program Cliquer~\cite{NO03},
we checked that the block graphs of these designs contain only canonical maximum cliques. In view
of this, we conjecture that the conclusion of Theorem~\ref{thm:EKRDesarguesianArcs} also holds
in non-Desarguesian projective planes.

\begin{conjecture}
Let $\A$ be a maximal $(n,d)$-arc in an arbitrary projective plane of order~$q$. Then the
largest intersecting families in the corresponding $2$-$(n,d,1)$ design are the canonical ones.
\end{conjecture}

A $2$-$(120,8,1)$ design $\D_0$ not embedded in a projective plane 
was constructed in~\cite{BHT97} by extending the partial geometry $\PQ^+(2n-1,2)$,
$n=2$ from~\cite{DCDT80}. The automorphism group $\Aut(\D_0)$ is isomorphic to
$(C_2)^4 \rtimes A_8$ of order $322560$, while the largest automorphism group of
the designs $\D_1,\ldots,\D_{93}$ is $\Aut(\D_1)\cong \PGammaL(2,16)$ of
order $16320$ ($\D_1$ is the dual arc of the non-degenerate conic in $\PG(2,16)$
together with its nucleus). In~\cite[p.~40]{BHT97} it was stated that several
non-isomorphic $2$-$(120,8,1)$ designs could possibly be obtained by extending the
same partial geometry $\PQ^+(3,2)$. Indeed, we constructed three more such
extensions $\D'_0$, $\D''_0$, and $\D'''_0$ with automorphism groups of orders
$720$, $576$, and $360$, respectively.

By using Cliquer~\cite{NO03}, we found that the block graph of $\D_0$ does contain non-canonical
maximum cliques, while the block graphs of $\D'_0$, $\D''_0$, $\D'''_0$ contain only canonical
maximum cliques. A few more $2$-$(120,8,1)$ designs were recently constructed in~\cite{IH26} by
prescribing groups of automorphisms. We were able to construct many more non-isomorphic examples
by the following two methods.
\begin{itemize}
\item Using GAP~\cite{GAP25} and the PAG package~\cite{PAG}, we computed subgroups of the
full automorphism groups of known designs. We then used suitable subgroups as prescribed
groups of automorphisms to construct more designs. We employed the computational method
described in~\cite{KKTVKW25}.
\item A switching method for Steiner $2$-designs called \emph{paramodification} was
introduced in~\cite{MN21}. We used our implementation of paramodification based on
Cliquer~\cite{NO03} to construct many new $2$-$(120,8,1)$ designs from old ones,
including some with trivial full automorphism groups.
\end{itemize}

A summary of our computations is given in the next theorem.

\begin{theorem}\label{thm120-8-1}
There are at least $391733$ non-isomorphic $2$-$(120,8,1)$ designs. Their distribution
by orders of full automorphism groups is given in Table~\ref{tab2}.
\end{theorem}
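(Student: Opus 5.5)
This is a computational existence statement, so the proof is a certified enumeration. The plan has three parts: exhibit an explicit list of designs, certify that each one really is a $2$-$(120,8,1)$ design, and certify that they are pairwise non-isomorphic while recording the order of each full automorphism group.

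\emph{Step 1: generate the list.} I start from the seed designs already described: $\D_0,\D'_0,\D''_0,\D'''_0$, the $93$ designs $\D_1,\ldots,\D_{93}$ coming from duals of hyperovals in the $22$ known planes of order $16$, and the designs of \cite{IH26}. I then enlarge the pool in two ways.
\begin{itemize}
\item For each design I compute its full automorphism group with GAP/PAG. I run through conjugacy classes of its subgroups of moderate order and use each as a prescribed automorphism group. By the method of \cite{KKTVKW25}, the design is then assembled from orbits of $8$-subsets, which reduces the problem to an exact cover solved with Cliquer. Every new solution goes back into the pool.
\item I apply paramodification \cite{MN21} repeatedly. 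This locates a substructure whose blocks can be swapped for an alternative set covering the same pairs, again found as cliques by Cliquer. It produces designs with small or trivial automorphism group.
\end{itemize}
I iterate until the count stabilises or resources run out.

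\emph{Step 2: verification.} For each design I check directly that it has $b=\frac{120\cdot119}{8\cdot7}=255$ blocks of size $8$. I also check that every one of the $\binom{120}{2}$ pairs of points lies in exactly one block. Each check is a trivial computation.

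\emph{Step 3: isomorphism classification.} I transform each design into a canonical labelling of its point--block incidence graph, for example with nauty/bliss. Designs are isomorphic if and only if their canonical forms coincide, so deduplicating by canonical form gives $391733$ distinct classes. The same computation returns $|\Aut(\D)|$ for each representative, and tabulating these orders yields Table~\ref{tab2}. As a cross-check, I compare the automorphism orders of the seed designs with the values already quoted ($322560$, $720$, $576$, $360$, and $16320$ for $\D_1$).

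\emph{Main obstacle.} The hard part is scale, not mathematics. Generating and deduplicating roughly $4\cdot10^5$ designs requires efficient canonical forms and storage of hashes. Paramodification can wander within one switching class, so I would seed it from many starting designs. Because the claim is only a lower bound, incompleteness of the search is harmless. Correctness rests solely on Steps 2 and 3, and these should be run with independent code to validate the count.
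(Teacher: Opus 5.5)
Your plan is the paper's computation. Both grow the pool of known $2$-$(120,8,1)$ designs ($\D_0,\D'_0,\D''_0,\D'''_0$ and $\D_1,\ldots,\D_{93}$) in two ways: by prescribing subgroups of their automorphism groups found with GAP/PAG (the method of \cite{KKTVKW25}), and by paramodification implemented with Cliquer. The count $391733$ and Table~\ref{tab2} then come from isomorphism rejection and computing $|\Aut(\D)|$. The paper leaves your verification and canonical-labelling steps implicit; its certificate is the published list of designs \cite{VKSteiner}.
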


\begin{table}[!ht]
\begin{center}
\begin{tabular}{|cc|cc|cc|cc|cc|cc|}
\hline
\rule{0mm}{9pt}$|\Aut|$ & $\#$ & $|\Aut|$ & $\#$ & $|\Aut|$ & $\#$ & $|\Aut|$ & $\#$ & $|\Aut|$ & $\#$ & $|\Aut|$ & $\#$ \\
\hline
\rule{0mm}{9pt}322560 & 1 & 1024 & 3 & 384 & 287 & 168 & 3 & 48 & 752 & 8 & 76497\\
16320 & 1 & 896 & 1 & 360 & 1 & 144 & 3 & 42 & 22 & 6 & 489 \\
9216 & 1 & 768 & 32 & 336 & 16 & 128 & 8419 & 32 & 98392 & 4 & 44027 \\
3072 & 3 & 720 & 1 & 320 & 1 & 112 & 2 & 24 & 697 & 3 & 1 \\
2688 & 6 & 576 & 2 & 288 & 5 & 96 & 574 & 16 & 103616 & 2 & 17214 \\
1536 & 8 & 512 & 88 & 256 & 768 & 80 & 1 & 14 & 2 & 1 & 3 \\
1152 & 4 & 448 & 1 & 192 & 315 & 64 & 38495 & 12 & 979 & & \\
\hline
\end{tabular}
\end{center}
\caption{Distribution of the known $2$-$(120,8,1)$ designs by $|\Aut(\D)|$.}\label{tab2}
\end{table}

A GAP-readable file containing a list of all the constructed designs is available on the
web page~\cite{VKSteiner}. The list is sorted by decreasing orders of automorphism
groups $|\Aut(\D)|$. The designs $\D_1,\ldots,\D_{93}$ appear in the list at positions
2, 457, 1549, 9971, 9972, 10547-10555, 49817-49819, 148906-148964,
252522, 252523, 253503-253507, 330000, 330001, 330489-330492, 374516-374519,
and the designs $\D_0$, $\D'_0$, $\D''_0$, $\D'''_0$ appear at positions 1, 61, 62, 440.
We analysed the block graphs of the constructed $2$-$(120,8,1)$ designs
and found that the majority do have non-canonical maximum cliques.

\begin{proposition}
For the $2$-$(120,8,1)$ designs of Theorem~\ref{thm120-8-1}, $362024$ out of $391733$ block graphs contain non-canonical maximum cliques~$Y$. Three different non-trivial linear spaces
$\ml_1$, $\ml_2$, $\ml_3$ occur as~$\ml(Y)$; they are described in Table~\ref{tab3}.
\end{proposition}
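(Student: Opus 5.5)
This proposition is a computational result, so the proof will be a certified computation together with a structural reduction that keeps the search feasible. For each of the $391733$ designs $\D$ of Theorem~\ref{thm120-8-1} (read from the GAP file of~\cite{VKSteiner}), I build the block graph $X_\D$. It has $b=\frac{120\cdot 119}{56}=255$ vertices and is strongly regular. By the Delsarte bound, the maximum cliques have size $r=17$. The goal is to list all $17$-cliques up to $\Aut(\D)$, discard the $120$ canonical ones, and, for each remaining clique $Y$, build the linear space $\ml(Y)$ and determine its isomorphism type.

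To make the search tractable, I will use the structure of a non-canonical maximum clique. Let $Y$ be a clique of size $17$ and, for a point $p$, let $x_p$ be the number of blocks of $Y$ through $p$. Counting incidences gives $\sum_p x_p = 17\cdot 8=136$. Counting pairs of blocks, each meeting in exactly one point, gives $\sum_p \binom{x_p}{2}=\binom{17}{2}=136$. These equations restrict the possible line-degree distributions of $\ml(Y)$, and a non-canonical $Y$ must have every $x_p<17$.

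In Cliquer~\cite{NO03}, I first fix a block $B\in Y$ up to the action of $\Aut(\D)$ on blocks, then one point $p\in B$ with $x_p\ge 2$. I then search for $16$-cliques in the common neighbourhood of $B$. For designs with large groups, orbit representatives cut the work down sharply. For designs with small groups, the neighbourhood has only $8\cdot 15=120$ vertices, which Cliquer handles directly.

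Next, for each non-canonical clique found, I compute $\ml(Y)$. Its points are the $17$ blocks, and its lines are the points $p$ with $x_p\ge 2$. I classify these linear spaces by canonical labelling, for instance with nauty or GAP's isomorphism tests on incidence structures. This should produce exactly three non-trivial types $\ml_1,\ml_2,\ml_3$, recorded by their point- and line-degree distributions in Table~\ref{tab3}. A tally over all designs should give the count $362024$. As consistency checks, I would confirm that no $17$-clique exceeds the Delsarte bound and that the designs $\D_1,\ldots,\D_{93}$ and $\D'_0,\D''_0,\D'''_0$ appear among the designs with only canonical cliques, in agreement with the earlier Cliquer runs.

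The main obstacle is the scale: roughly $4\cdot 10^5$ clique searches, most on designs with tiny automorphism groups. This makes a rigorous guarantee of exhaustiveness the crux. I would rely on Cliquer's exact search within each common neighbourhood, which is complete, and cross-validate on a random sample with an independent method, such as an integer-programming formulation using the $x_p$ equations above.
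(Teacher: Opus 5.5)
Your plan is essentially the paper's approach: an exhaustive Cliquer search for $17$-cliques in the $255$-vertex block graphs $X_\D$, followed by building $\ml(Y)$ for each non-canonical clique and sorting these into the isomorphism types $\ml_1,\ml_2,\ml_3$; your orbit and neighbourhood reductions only speed this up. One small slip: the neighbourhood of a block in $X_\D$ has $k(r-1)=8\cdot 16=128$ vertices, not $8\cdot 15=120$, though this does not affect the argument.
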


\begin{table}[t]
\begin{center}
\begin{tabular}{|cccccc|}
\hline
\rule{0mm}{10pt}Lin.\ sp. & No.\ of points & Point degrees & No.\ of lines & Line degrees & $|\Aut(\ml_i)|$ \\[0.5mm]
\hline
\rule{0mm}{10pt}$\ml_1$ & 17 & $\{4^1, 5^{16}\}$ & 20 & $\{4^{16}, 5^4\}$ & 1152 \\[0.5mm]
$\ml_2$ & 17 & $\{7^{16}, 8^1\}$ & 40 & $\{2^{16}, 3^8, 4^{16} \}$ & 384 \\[0.5mm]
$\ml_3$ & 17 & $\{5^4, 7^{10}, 8^3\}$ & 38 & $\{2^{16}, 3^9, 4^{12}, 7^1\}$ & 24 \\[0.5mm]
\hline
\end{tabular}
\end{center}
\caption{Linear spaces occurring as non-canonical maximum cliques in $2$-$(120,8,1)$ designs.}\label{tab3}
\end{table}

\begin{table}[b!]
\begin{center}
\begin{tabular}{|cc|cc|cc|cc|cc|}
\hline
\rule{0mm}{9pt}$(n_1,n_2,n_3)$ & $\#$ & $(n_1,n_2,n_3)$ & $\#$ & $(n_1,n_2,n_3)$ & $\#$ & $(n_1,n_2,n_3)$ & $\#$ & $(n_1,n_2,n_3)$ & $\#$ \\
\hline
\rule{0mm}{9pt}$(840, 0, 0)$ & 1 & $(268, 0, 0)$ & 1 & $(160, 0, 0)$ & 705 & $(90, 0, 0)$ & 1 & $(42, 0, 0)$ & 562 \\
$(616, 0, 0)$ & 1 & $(264, 0, 0)$ & 36 & $(156, 0, 0)$ & 9 & $(88, 0, 0)$ & 3045 & $(40, 0, 0)$ & 117 \\
$(488, 0, 0)$ & 1 & $(260, 0, 0)$ & 2 & $(152, 0, 0)$ & 2178 & $(86, 0, 0)$ & 2 & $(36, 0, 0)$ & 239 \\
$(472, 0, 0)$ & 1 & $(256, 0, 0)$ & 22 & $(148, 0, 0)$ & 51 & $(84, 0, 0)$ & 16 & $(32, 0, 0)$ & 1580 \\
$(456, 0, 0)$ & 1 & $(252, 0, 0)$ & 13 & $(146, 0, 0)$ & 1 & $(82, 0, 0)$ & 3 & $(30, 0, 0)$ & 4 \\
$(448, 0, 0)$ & 2 & $(248, 0, 0)$ & 119 & $(144, 0, 0)$ & 2564 & $(80, 0, 0)$ & 4074 & $(28, 0, 0)$ & 193 \\
$(440, 0, 0)$ & 3 & $(244, 0, 0)$ & 4 & $(142, 0, 0)$ & 1 & $(78, 0, 0)$ & 2 & $(26, 0, 0)$ & 3 \\
$(424, 0, 0)$ & 2 & $(240, 0, 0)$ & 209 & $(140, 0, 0)$ & 5 & $(76, 0, 0)$ & 68 & $(24, 0, 0)$ & 5865 \\
$(408, 0, 0)$ & 2 & $(236, 0, 0)$ & 4 & $(136, 0, 0)$ & 1473 & $(74, 0, 0)$ & 12 & $(22, 0, 0)$ & 5 \\
$(400, 0, 0)$ & 2 & $(232, 0, 0)$ & 89 & $(132, 0, 0)$ & 3 & $(72, 0, 0)$ & 17675 & $(20, 0, 0)$ & 2538 \\
$(376, 0, 0)$ & 8 & $(228, 0, 0)$ & 1 & $(130, 0, 0)$ & 1 & $(68, 0, 0)$ & 142 & $(18, 0, 0)$ & 6 \\
$(360, 0, 0)$ & 4 & $(224, 0, 0)$ & 81 & $(128, 0, 0)$ & 185 & $(66, 0, 0)$ & 11 & $(16, 0, 0)$ & 2124 \\
$(352, 0, 0)$ & 7 & $(216, 0, 0)$ & 195 & $(124, 0, 0)$ & 4 & $(64, 0, 0)$ & 17440 & $(14, 0, 0)$ & 31 \\
$(344, 0, 0)$ & 14 & $(212, 0, 0)$ & 4 & $(122, 0, 0)$ & 1 & $(62, 0, 0)$ & 5 & $(12, 0, 0)$ & 7272 \\
$(340, 0, 0)$ & 1 & $(208, 0, 0)$ & 99 & $(120, 0, 0)$ & 1531 & $(60, 0, 0)$ & 174 & $(10, 0, 0)$ & 28 \\
$(336, 0, 0)$ & 42 & $(200, 0, 0)$ & 427 & $(118, 0, 0)$ & 1 & $(58, 0, 0)$ & 2 & $(8, 48, 0)$ & 2037 \\
$(328, 0, 0)$ & 2 & $(196, 0, 0)$ & 2 & $(116, 0, 0)$ & 2 & $(56, 0, 64)$ & 42 & $(8, 24, 0)$ & 54 \\
$(320, 0, 0)$ & 15 & $(192, 0, 0)$ & 717 & $(112, 0, 0)$ & 344 & $(56, 0, 32)$ & 30 & $(8, 12, 0)$ & 3 \\
$(312, 0, 0)$ & 13 & $(188, 0, 0)$ & 12 & $(110, 0, 0)$ & 2 & $(56, 0, 16)$ & 60 & $(8, 0, 0)$ & 15644 \\
$(304, 0, 0)$ & 1 & $(184, 0, 0)$ & 452 & $(108, 0, 0)$ & 8 & $(56, 0, 0)$ & 257477 & $(4, 0, 0)$ & 114 \\
$(296, 0, 0)$ & 14 & $(180, 0, 0)$ & 1 & $(104, 0, 0)$ & 847 & $(54, 0, 0)$ & 14 & $(2, 0, 0)$ & 134 \\
$(288, 0, 0)$ & 12 & $(178, 0, 0)$ & 1 & $(100, 0, 0)$ & 7 & $(52, 0, 0)$ & 63 & $(0, 48, 0)$ & 49 \\
$(284, 0, 0)$ & 3 & $(176, 0, 0)$ & 730 & $(98, 0, 0)$ & 3 & $(50, 0, 0)$ & 131 & $(0, 24, 0)$ & 10 \\
$(280, 0, 0)$ & 24 & $(168, 0, 0)$ & 1055 & $(96, 0, 0)$ & 618 & $(48, 0, 0)$ & 7771 & $(0, 0, 0)$ & 29709 \\
$(272, 0, 0)$ & 44 & $(164, 0, 0)$ & 2 & $(92, 0, 0)$ & 30 & $(44, 0, 0)$ & 80 & & \\
\hline
\end{tabular}
\end{center}
\caption{Distribution of the known $2$-$(120,8,1)$ designs by their vectors $(n_1,n_2,n_3)$.}\label{tab4}
\end{table}

This gives a strong negative answer to Problem~2, in the sense that the
non-canonical maximum cliques are neither designs nor duals of designs.
Every $2$-$(120,8,1)$ design has $120$ canonical maximum cliques~$Y$, leading
to the trivial linear space as~$\ml(Y)$. For each of our designs~$\D$, we 
calculated a vector $(n_1,n_2,n_3)$ containing the numbers $n_i$ of non-canonical 
maximum cliques~$Y$ in~$X_\D$ such that $\ml(Y)=\ml_i$, $i\in\{1,2,3\}$. For 
example, the block graph $X_{\D_0}$ contains exactly $840$ maximum cliques with 
$\ml(Y)=\ml_1$ and no maximum cliques with $\ml(Y)=\ml_2$ or $\ml(Y)=\ml_3$, 
so the vector of $\D_0$ is $(840,0,0)$. The distribution of all our designs by 
these vectors is given in Table~\ref{tab4}.

In view of these computational results, it appears that candidates for other
designs with non-canonical maximum cliques could be designs with parameters
as Denniston designs, but not embedded in projective planes. The next few
cases are $2$-$(232,8,1)$, $2$-$(496,16,1)$, and $2$-$(976,16,1)$. We have
not succeeded in constructing examples of these designs possessing non-canonical maximum
cliques.

\section{Tight dual arcs as non-canonical maximum cliques}\label{sec:TightDualArcs}

A $2$-$(66,6,1)$ design was constructed by Denniston~\cite{D80} by extending a
certain $2$-$(40,4,1)$ design of Lorimer~\cite{PL74}. In~\cite{VK04}, the $2$-$(66,6,1)$ designs
with automorphisms of order~$13$ were classified and two more examples were found.
It was noted in~\cite{GK24} that these three designs contain non-canonical
maximum cliques in their block graphs. A fourth $2$-$(66,6,1)$ design with an automorphism
group of order~$20$ was recently constructed in~\cite{BHR25}, but this design contains
only canonical maximum cliques.

We give a short description of the three known $2$-$(66,6,1)$ designs with non-canonical
maximum cliques, in the style of~\cite[Table~II.3.32 on p.~76]{CD07}. The three designs
$\D_{66}^{(i)}$, $i\in\{1,2,3\}$ are invariant under a non-Abelian group~$G$ of
order~$39$ acting on the point set $V=A\cup B\cup C\cup \{\infty\}$. Here,
$A=\Z_{13}\times \{a\}$, $B=\Z_{13}\times \{b\}$, $C=\Z_{13}\times \Z_{3}$, and
points from these sets will be written using subscripts, e.g.\ as $0_a$, $7_b$, $12_0$.
The group $G=\langle \alpha, \beta\rangle$ is generated by a permutation $\alpha$
of order~$3$ and a permutation $\beta$ of order~$13$.
The generator $\alpha$ comprises cycles $(x_0, 3x_1, 9x_2)$ and $(x_t, 3x_t, 9x_t)$,
while $\beta$ comprises cycles $(0_s, 1_s,\ldots, 12_s)$, for $x\in \Z_{13}$, $t\in \{a,b\}$,
and $s\in \Z_3\cup\{a,b\}$. The sets $A$, $B$, $C$, and $\{\infty\}$ are point-orbits of the
designs. Each design $\D_{66}^{(i)}$ has two block-orbits $(S_j^{(i)})^G$ of
size~$13$, $j\in\{1,2\}$, and three block-orbits $(L_j^{(i)})^G$ of size~$39$,
$j\in\{1,2,3\}$. The block-orbits are obtained by developing the base blocks
given in Table~\ref{tab1}.

\begin{table}[!ht]
\begin{center}
\begin{tabular}{|c|ccc|}
\cline{2-4}
\multicolumn{1}{c|}{} & \rule{0mm}{11pt}$i=1$ & $i=2$ & $i=3$\\[0.5mm]
\hline
\rule{0mm}{13pt}$S_1^{(i)}$ & $\{\infty,0_a,0_b,0_0,0_1,0_2\}$ & $\{\infty,0_a,0_b,0_0,0_1,0_2\}$ & $\{\infty,0_a,0_b,0_0,0_1,0_2\}$ \\[1mm]
$S_2^{(i)}$ & $\{0_a, 1_a, 10_a, 5_b, 7_b, 12_b\}$ & $\{0_a, 1_a, 10_a, 5_b, 7_b, 12_b\}$ & $\{0_a, 2_a, 8_a, 6_b, 7_b, 10_b\}$\\[1mm]
$L_1^{(i)}$ & $\{0_a, 1_b, 4_0, 6_0, 7_1, 11_2\}$ & $\{0_a, 1_b, 9_0, 12_0, 5_2, 10_2\}$ & $\{0_a, 1_b, 4_0, 12_0, 3_1, 6_2\}$\\[1mm]
$L_2^{(i)}$ & $\{0_a, 2_a, 3_0, 10_0, 5_2, 6_2\}$ & $\{0_a, 2_a, 8_0, 4_1, 9_1, 11_2\}$ & $\{0_a, 1_a, 3_0, 7_0, 10_0, 8_2\}$\\[1mm]
$L_3^{(i)}$ & $\{0_b, 1_b, 11_1, 3_2, 9_2, 12_2\}$ & $\{0_b, 1_b, 4_0, 3_2, 6_2, 12_2\}$ & $\{0_b, 2_b, 9_0, 10_0, 5_1, 12_1\}$\\[1mm]
\hline
\end{tabular}
\end{center}
\caption{Base blocks for the $2$-$(66,6,1)$ designs $\D_{66}^{(i)}$, $i=1,2,3$.}\label{tab1}
\end{table}

In~\cite{GK24}, the block graphs of the designs $\D_{66}^{(i)}$, $i\in\{1,2,3\}$ were examined
and the non-canonical maximum cliques were listed. Each of the graphs $X_{\D_{66}^{(i)}}$ possesses
$13$ maximum cliques $Y$ such that the corresponding linear space $\ml(Y)$ is a projective plane of
order~$3$, i.e.\ a $2$-$(13,4,1)$ subdesign, and one maximum clique~$Y$ such that $\ml(Y)$ is
a $2$-$(13,3,1)$ design. The latter $Y$ is the block-orbit $(S_2^{(i)})^G$ and in this case the lines of
$\ml(Y)$ are the point-orbits $A\cup B$. The complementary point-orbits $C\cup \{\infty\}$
form a maximal $4$-arc in $\D_{66}^{(i)}$, i.e.\ a $2$-$(40,4,1)$ subdesign. For Denniston's
design $\D_{66}^{(1)}$ and for $\D_{66}^{(3)}$, the $2$-$(40,4,1)$ subdesign is isomorphic
to Lorimer's design~\cite{PL74} with automorphism group of order $151632$, while for
$\D_{66}^{(2)}$ it is another $2$-$(40,4,1)$ design with~$G$ as full automorphism group.
For all three designs $\D_{66}^{(i)}$, the maximum cliques such that $\ml(Y)$ is a projective
plane of order~$3$ are exactly the non-canonical maximum cliques of the $2$-$(40,4,1)$ subdesign.

Maximal arcs were generalised from projective planes to arbitrary 2-designs by
E.~J.~Morgan~\cite{EJM77} and Tran Van Trung~\cite{TVT91}. We give here
an overview of some of their results in the special case $\lambda=1$, i.e.\ for Steiner
2-designs.

\begin{proposition}[{\cite[Lemma~1.2]{TVT91}}]\label{ArcBoundBIBD}
If $\A$ is a $(n,d)$-arc in a $2$-$(v,b,r,k,1)$ design, then $n\le 1+r(d-1)$.
Equality holds if and only if each block meets $\A$ in $0$ or $d$ points.
\end{proposition}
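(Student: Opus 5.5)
Fix a point $P\in\A$ and count the points of $\A\setminus\{P\}$ along the blocks through $P$, exactly as one proves the bound $n\le 1+(q+1)(d-1)$ for arcs in a projective plane. In a $2$-$(v,b,r,k,1)$ design every other point $Q$ lies on exactly one block with $P$. So the $r$ blocks through $P$ partition $V\setminus\{P\}$, and in particular they partition $\A\setminus\{P\}$. Each such block $L$ contains $P$, hence meets $\A$ in at most $d$ points, and therefore contributes at most $d-1$ points of $\A\setminus\{P\}$. Summing over the $r$ blocks gives
\[
n-1=\sum_{L\ni P}\bigl(|L\cap\A|-1\bigr)\le r(d-1),
\]
which is the bound $n\le 1+r(d-1)$. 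Since an arc has $n>1$ and so is non-empty, such a point $P$ exists.

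For the equality characterisation I handle the two directions separately. Suppose $n=1+r(d-1)$. Then for every $P\in\A$ each of the $r$ blocks through $P$ must meet $\A$ in exactly $d$ points. Now take any block $L$ with $L\cap\A\neq\emptyset$ and pick $P\in L\cap\A$. Then $L$ is one of the blocks through $P$, so $|L\cap\A|=d$. Hence every block meets $\A$ in $0$ or $d$ points.

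Conversely, suppose every block meets $\A$ in $0$ or $d$ points. Take $P\in\A$. Every block through $P$ meets $\A$ in at least one point, so it meets $\A$ in exactly $d$ points. Each term in the sum above is then $d-1$, and so $n=1+r(d-1)$.

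There is no real obstacle. The only point needing care is that the partition of $V\setminus\{P\}$ by the blocks through $P$ relies on $\lambda=1$, which is the case under consideration. For general $\lambda$ one would count pairs $(Q,L)$ instead, but that is not needed here.
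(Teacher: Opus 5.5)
Your proof is correct and complete. Counting $\A\setminus\{P\}$ along the $r$ blocks through a fixed $P\in\A$ gives the bound, and both directions of the equality case follow as you say. The paper gives no proof of its own here; it only cites Lemma~1.2 of Tran Van Trung. Your argument is the standard one, namely the $\lambda=1$ case of the pair count you mention, and it is the same reasoning as for the bound $n\le 1+(q+1)(d-1)$ in projective planes quoted in Section~\ref{sec:ProofThm1}.
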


\begin{proposition}[{\cite[Corollary~3.2]{EJM77}}]\label{MaxArcsBIBD}
If there is a maximal $(n,d)$-arc $\A$ in a $2$-$(v,b,r,k,1)$ design $\D$,
then $d$ divides $r-1$. The points of $\A$ and its secants as blocks
constitute a $2$-$(n,d,1)$ design.
\end{proposition}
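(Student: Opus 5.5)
The statement has two parts: the divisibility condition $d \mid r-1$, and the claim that the points of $\A$ with its secants form a Steiner $2$-design. I would prove both by counting around a single point of the arc, using Proposition~\ref{ArcBoundBIBD} as the main input.

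First I invoke Proposition~\ref{ArcBoundBIBD}. Since $\A$ is maximal, $n = 1 + r(d-1)$, and every block of $\D$ meets $\A$ in either $0$ or $d$ points. Call the blocks meeting $\A$ in $d$ points the secants.

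Next I fix a point $P \in \A$ and count the points of $\A \setminus \{P\}$ via the $r$ blocks through $P$. Each of these blocks meets $\A$ in at least one point, namely $P$, so it must be a secant. It therefore contains exactly $d-1$ points of $\A$ other than $P$. Because any two points lie in exactly one block, these $r$ blocks partition $V \setminus \{P\}$. Hence
\[
|\A \setminus \{P\}| = r(d-1),
\]
which agrees with $n-1$.

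For the divisibility, I count the secants. Every secant contains exactly $d$ points of $\A$, and any two points of $\A$ lie in a unique block, which is then a secant. Double counting pairs of points of $\A$ gives
\[
N \binom{d}{2} = \binom{n}{2},
\]
where $N$ is the number of secants. The point count above shows that each point of $\A$ lies on exactly $r$ secants, so also $Nd = nr$. This gives $N = nr/d$, so we need $d \mid nr$. Since $n = 1 + r(d-1) \equiv 1 - r \pmod d$, we have
\[
nr \equiv r(1-r) \pmod d.
\]
Thus the condition is $d \mid r(r-1)$, which is not yet $d \mid r-1$.

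This is the expected obstacle: bare block counting only yields $d \mid r(r-1)$. To sharpen it I would fix a point $Q \notin \A$ and count the points of $\A$ via the $r$ blocks through $Q$. Again these blocks partition $V \setminus \{Q\}$, so they partition $\A$. Each such block is either external or a secant, so it meets $\A$ in $0$ or $d$ points. Therefore $d \mid n$. Since $n \equiv 1 - r \pmod d$, this gives $d \mid r-1$. The argument needs a point outside $\A$, which exists whenever $\A \neq V$; if $\A = V$ then $d = k$ and $n = v = 1 + r(k-1)$, so the divisibility $k \mid r-1$ must be handled separately or excluded as degenerate.

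Finally, for the design statement: the secants restricted to $\A$ are $d$-subsets of $\A$, and any two points of $\A$ lie in exactly one block of $\D$, which is a secant. Hence the points of $\A$ and the secants form a $2$-$(n,d,1)$ design.
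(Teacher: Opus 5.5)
The paper gives no proof of this statement; it only cites Morgan~\cite{EJM77}. Your argument is correct and is the standard one. The decisive step is the count through a point $Q\notin\A$: the $r$ blocks through $Q$ partition $\A$ into pieces of size $0$ or $d$, so $d\mid n=1+r(d-1)$ and hence $d\mid r-1$. This is exactly how one gets $d\mid q$ for maximal arcs in projective planes. Your preliminary double count giving $d\mid r(r-1)$ is not needed. The design part is also fine: any two points of $\A$ lie on a unique block of $\D$, which meets $\A$ in at least two points and hence in exactly $d$.

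One point should be sharpened. The degenerate case $\A=V$ cannot be ``handled separately''; it has to be excluded, because the divisibility genuinely fails there. For example, the Fano plane with $\A=V$ has $d=k=3$ and $r-1=2$. This case is equivalent to $d=k$: if $d=k$, every block through a point of $\A$ is a secant and so lies inside $\A$, and these blocks cover $V$. So the statement tacitly assumes $d<k$, just as the paper's projective-plane version says ``if $d<q+1$''.
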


\begin{proposition}[{\cite[Corollary~3.3]{EJM77}}]\label{DualArcsBIBD}
If there is a maximal $(n,d)$-arc in a $2$-$(v,b,r,k,1)$ design, then
there is a maximal $(n',d')$-arc in its dual, where $d'=(r-1)/d$ and $n'=b-nr/d$.
\end{proposition}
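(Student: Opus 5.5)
We prove Proposition~\ref{DualArcsBIBD} by a direct double counting in the dual design $\D'$, whose points are the $b$ blocks of $\D$ and whose blocks correspond to the $v$ points of $\D$. Each block of $\D'$ has $r$ points, since each point of $\D$ lies on $r$ blocks. Two distinct points of $\D$ lie on exactly one common block, so two blocks of $\D'$ share exactly one point; $\D'$ is a linear-space dual in which any two blocks meet in one point. The candidate dual arc is $\A'$, the set of \emph{exterior} blocks of $\A$, i.e.\ blocks of $\D$ disjoint from $\A$.

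First we count $|\A'|$. By Proposition~\ref{ArcBoundBIBD}, every block meets the maximal arc $\A$ in $0$ or $d$ points. Counting incident pairs (point of $\A$, block through it) gives $nr$, and each secant is counted $d$ times. Hence there are $nr/d$ secants, and $n'=|\A'|=b-nr/d$.

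Next we compute the degree of $\A'$ as an arc in $\D'$. A block of $\D'$ is a point $P$ of $\D$, and its intersection with $\A'$ is the set of exterior blocks through $P$.
\begin{itemize}
\item If $P\in\A$, every block through $P$ meets $\A$, so this intersection is empty.
\item If $P\notin\A$, the $n$ points of $\A$ are joined to $P$ by blocks, each of which is a secant containing exactly $d$ points of $\A$. Hence exactly $n/d$ secants pass through $P$, and the number of exterior blocks through $P$ is $r-n/d$.
\end{itemize}
Using $n=1+r(d-1)$, we get
\[
r-\frac{n}{d}=\frac{rd-1-rd+r}{d}=\frac{r-1}{d}=d'.
\]
This is an integer by Proposition~\ref{MaxArcsBIBD}. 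So every block of $\D'$ meets $\A'$ in $0$ or $d'$ points.

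It remains to see that $\A'$ is a maximal $(n',d')$-arc. Some block does attain $d'$: provided $v>n$, any point outside $\A$ gives such a block. In the degenerate case $\A=V$ we have $d=k$ and $d'=(r-1)/k$, which would need separate mention; there $\A'$ is empty and the statement is vacuous. We then need $n'=1+r'(d'-1)$, where $r'=k$ is the number of blocks of $\D'$ through a point of $\D'$. One option is the analogue of Proposition~\ref{ArcBoundBIBD} in $\D'$. That proposition was stated for $2$-designs, but its proof only uses that any two points of the dual lie on at most one common block, so it still applies. Alternatively, we verify the identity directly. Since every block meets $\A'$ in $0$ or $d'$ points, fixing an exterior block $E$ and counting the other exterior blocks through the $k$ points of $E$ gives
\[
n'-1=k(d'-1).
\]
Each other exterior block meets $E$ in at most one point, and this count covers all of them provided every exterior block meets $E$.

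This last point is the main obstacle: disjoint blocks exist in $\D$, i.e.\ parallel points of $\D'$. So the bound $n'\le 1+r'(d'-1)$ may fail to be attained in the naive sense, and one must check whether ``maximal'' in Morgan's sense just means each dual block meets $\A'$ in $0$ or $d'$ points. I would settle this by adopting that definition, which was already established above, and noting that the counting identity follows when $\D'$ is treated with Proposition~\ref{ArcBoundBIBD}'s characterisation of equality.
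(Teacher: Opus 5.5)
Everything up to and including your observation that a point outside $\A$ gives a block of $\D'$ meeting $\A'$ in $d'$ points is correct, and it already proves the statement. Counting flags gives $nr/d$ secants, so $n'=b-nr/d$. A point of $\A$ lies on no exterior block. A point $P\notin\A$ lies on exactly $n/d$ secants, hence on $r-n/d=(r-1)/d=d'$ exterior blocks. In this paper a maximal $(n',d')$-arc of $\D'$ is \emph{defined} by the property that every block of $\D'$ meets it in $0$ or $d'$ points; see the paragraph just before Proposition~\ref{TightArcsBIBD}. So nothing more is needed. The paper quotes the result from Morgan without proof, and your counting is the natural argument.

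The problem is your final discussion of $n'=1+k(d'-1)$. It contains false claims that should be deleted, not repaired.
\begin{itemize}
\item Proposition~\ref{ArcBoundBIBD} does not transfer to $\D'$. Its proof needs every point of $\A\setminus\{P\}$ to be joined to $P$ by \emph{some} block. That is exactly what fails in $\D'$, because two blocks of $\D$ may be disjoint.
\item Consequently $n'=1+k(d'-1)$ does not follow, and there is no upper bound $n'\le 1+k(d'-1)$ either. Your own count shows the inequality runs the other way. An exterior block $E$ meets exactly $k(d'-1)$ other exterior blocks, so $n'=1+k(d'-1)+e$, where $e\ge 0$ is the number of exterior blocks disjoint from $E$. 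Equality is precisely the tightness condition characterised in Propositions~\ref{TightArcsBIBD} and~\ref{TightArcs}, and it fails in general.
\item A concrete counterexample: take $\D=\AG(2,8)$, with $r=9$, $k=8$, $b=72$. A hyperoval of $\PG(2,8)$ disjoint from the line at infinity is a maximal $(10,2)$-arc in $\D$. This gives $d'=4$ and $n'=72-45=27$, whereas $1+k(d'-1)=25$.
\item The case $\A=V$ is not vacuous. There $\A'=\emptyset$, which is not an arc, so the conclusion would be false. That case has to be excluded by the convention on arcs (e.g.\ $d<k$), as it must be for Proposition~\ref{MaxArcsBIBD} too: in the Fano plane, $d=k=3$ does not divide $r-1=2$.
\end{itemize}
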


The dual $\D'$ of a $2$-$(v,b,r,k,1)$ design $\D$ is the incidence structure with
points and blocks reversed. Notice that $\D'$ is not a design, unless $\D$ is a projective plane.
A maximal $(n',d')$-arc in $\D'$ is a set~$\A'$ of~$n'$ points of~$\D'$ (blocks of~$\D$)
such that every block of~$\D'$ (point of~$\D$) is incident with~$0$ or~$d'$ elements of~$\A'$. In~$\D'$, there are~$k$ blocks through every point and one could expect the size
of a maximal $(n',d')$-arc to be $n'=1+k(d'-1)$, but this is not always true. The next
proposition characterises when it does hold.

\begin{proposition}[{\cite[Theorem~3.5]{EJM77}}]\label{TightArcsBIBD}
Let $\A$ be a maximal $(n,d)$-arc in a $2$-$(v,b,r,k,1)$ design $\D$,
and $\A'$ the corresponding maximal $(n',d')$-arc in the dual $\D'$.
Then $n'=1+k(d'-1)$ if and only if either (a) or (b) holds:
\begin{enumerate}
\item[(a)] $v=b$ and the design $\D$ is a projective plane,
\item[(b)] $d=k-k/d'$, so $d'$ divides $k$ and the design $\D$ has parameters
$v=k(d'k-k-d'+2)$, $b=(d'k-k-d'+2)(d'k-k+1)$, and $r=d'k-k+1$.
\end{enumerate}
\end{proposition}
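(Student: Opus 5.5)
The plan is a double count of incidences between the points of $\D$ and the blocks of $\A'$, written so that the condition $n'=1+k(d'-1)$ turns into an identity in $v,b,r,k,d$. Here $\A'$ is the set of blocks of $\D$ disjoint from $\A$, and $n'=b-nr/d$ by Proposition~\ref{DualArcsBIBD}. Every point of $\D$ lies on $0$ or $d'$ blocks of $\A'$. A point of $\A$ lies on no block of $\A'$, since all $r$ blocks through it are secants. For a point $P\notin\A$, the blocks through $P$ meet $\A$ in $0$ or $d$ points, and they partition $\A$. Hence $P$ lies on exactly $n/d$ secants and $r-n/d$ exterior blocks. Since $n=1+r(d-1)$, we get $r-n/d=(r-1)/d=d'$, so every point off $\A$ has degree exactly $d'$ with respect to $\A'$.

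Counting incidences in two ways gives $n'k=(v-n)d'$. The condition $n'=1+k(d'-1)$ is then equivalent to
\[
(v-n)d'=k+k^2(d'-1).
\]
Into this I substitute $v=1+r(k-1)$, $n=1+r(d-1)$, and $r=1+dd'$. This gives $v-n=r(k-d)$, so the condition becomes
\[
(1+dd')(k-d)d'=k+k^2(d'-1).
\]
I treat this as a quadratic in $d$ and expect it to factor. Expanding gives
\[
kd'+dd'^2k-dd'-d^2d'^2=k+k^2d'-k^2.
\]
The difference of the two sides should factor as
\[
\bigl(d'(k-d)-1\bigr)\bigl(dd'-k(d'-1)\bigr)\quad\text{up to sign.}
\]
I will verify this by expanding. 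The product equals $d'^2kd-d'^2k^2+d'k^2-dd'^2d+\ldots$, and I will check the remaining terms routinely.

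The first factor vanishing means $d'(k-d)=1$, so $d'=1$ and $d=k-1$. Then $r=k$ and $v=k^2-k+1=b$, which is a projective plane, case (a). The second factor vanishing gives $dd'=k(d'-1)$, that is, $d=k-k/d'$. Since $d$ is an integer, $d'$ divides $k$; this is case (b). In case (b), $r=1+dd'=1+k(d'-1)=d'k-k+1$ and $v=1+r(k-1)$. I will check that $1+(d'k-k+1)(k-1)$ simplifies to $k(d'k-k-d'+2)$. Then $b=vr/k=(d'k-k-d'+2)(d'k-k+1)$.

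For the converse, both (a) and (b) make a factor vanish, so the identity holds and $n'=1+k(d'-1)$ follows. In case (a) I use $d=k-1-\ldots$ directly, via $r=k$ with $d'=(r-1)/d$. I have to be careful here: in a projective plane, $d'=(k-1)/d$ need not equal $1$. In that case (a) does not come from the first factor. Instead I check (a) directly: dually, $n'=1+(q+1)(d'-1)$ is exactly the maximal arc bound in the dual plane, and $k=q+1$, so the equality holds.

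The step I expect to be the main obstacle is the exact factorisation: getting the quadratic identity right, and correctly matching its branches to cases (a) and (b), including the projective plane subtlety.
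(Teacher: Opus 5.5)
\textbf{There is a genuine gap: the factorisation is false, and your forward direction rests on it.} Your reduction itself is correct. The degree count $r-n/d=d'$, the double count $n'k=(v-n)d'$, and $v-n=r(k-d)$ with $r=1+dd'$ are all fine. So $n'=1+k(d'-1)$ is indeed equivalent to $(1+dd')(k-d)d'=k+k^2(d'-1)$.

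Test the factorisation on a hyperoval in $\PG(2,4)$, where $k=r=5$ and $d=d'=2$. Both sides equal $30$, so the identity holds. Yet your proposed product is $(2\cdot 3-1)(4-5)=-5\neq 0$. Taken at face value, your argument shows that $n'=1+k(d'-1)$ forces either $d'=1$ or $d=k-k/d'$. This example refutes that: it is a projective plane with $d'=2$ and $d\neq 5-5/2$.

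The ``projective plane subtlety'' you noticed is exactly this symptom. If (a) with $d'\neq 1$ satisfies the identity but makes neither of your factors vanish, the factorisation cannot be an identity. Patching only the converse direction leaves the forward direction broken.

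The correct factorisation comes from writing $x=dd'$. The difference is then the monic quadratic $x^2-(d'k-1)x+k(k-1)(d'-1)$, whose roots are $x=k(d'-1)$ and $x=k-1$. Hence
\[
k+k^2(d'-1)-(1+dd')(k-d)d' = (dd'-k+1)\bigl(dd'-k(d'-1)\bigr) = (r-k)\bigl(dd'-k(d'-1)\bigr),
\]
that is, $k\bigl(1+k(d'-1)-n'\bigr)=(r-k)\bigl(dd'-k(d'-1)\bigr)$.

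With this, the first branch is $r=k$. That gives $v=1+k(k-1)=b$, a projective plane, with no restriction on $d'$. The second branch is your case (b), and your parameter computations there are correct. Both directions now follow from this single identity, and the separate appeal to the dual maximal-arc bound for (a) is unnecessary.

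The paper only quotes this result from Morgan without proof, so there is no argument there to compare your route against.
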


When the equality $n'=1+k(d'-1)$ holds, we shall refer to the dual
arc~$\A'$ as \emph{tight}. Here is another characterisation of tight
dual arcs.

\begin{proposition}\label{TightArcs}
Let $\A$ be a maximal $(n,d)$-arc in a $2$-$(v,b,r,k,1)$ design $\D$,
and $\A'$ the corresponding dual $(n',d')$-arc.
Then $n'=1+k(d'-1)$ holds if and only if the points
of $\A'$ and its secants as blocks constitute a $2$-$(n',d',1)$ design,
which is a non-canonical maximal clique in~$\D$.
\end{proposition}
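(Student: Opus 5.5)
The plan is to work entirely inside $\D$, reading $\A'$ as a set of $n'$ blocks of $\D$ such that every point of $\D$ lies on either $0$ or $d'$ blocks of $\A'$. Its ``secants'' are then the points of $\D$ lying on exactly $d'$ blocks of $\A'$. Throughout I keep the standing convention that $\D$ is not a projective plane, i.e.\ $v\ge k^2$ and $b>v$. The one structural fact I will use repeatedly is that two distinct blocks of a Steiner $2$-design share at most one point. Consequently the incidence structure formed by $\A'$ and its secants is always a partial linear space: two elements of $\A'$ lie on at most one common secant, and every secant carries exactly $d'$ elements.

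\emph{Step 1: a local count.} Fix a block $Y\in\A'$. Each of the $k$ points of $Y$ is incident with $Y\in\A'$, so it is a secant and lies on exactly $d'-1$ further blocks of $\A'$. Two distinct blocks of $\A'$ meet $Y$ in at most one point each and share at most one point with each other. Hence the $k(d'-1)$ blocks obtained this way are pairwise distinct, and they are precisely the blocks of $\A'\setminus\{Y\}$ that meet $Y$. This gives
\[
n'-1\ \ge\ k(d'-1),
\]
with equality if and only if every block of $\A'$ other than $Y$ meets $Y$.

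\emph{Step 2: equivalence with the clique property.} Since $Y\in\A'$ was arbitrary, Step~1 shows that $n'=1+k(d'-1)$ holds if and only if any two blocks of $\A'$ intersect, i.e.\ $\A'$ is a clique in $X_\D$. For one direction, equality for a single $Y$ already holds for every $Y$, because $n'$ does not depend on $Y$. If $\A'$ is a clique, then any two of its elements lie on exactly one common secant. Together with the constant secant size $d'$, this makes $\A'$ with its secants a $2$-$(n',d',1)$ design. Conversely, if that structure is a $2$-$(n',d',1)$ design, then any two elements of $\A'$ share a point, so $\A'$ is a clique and equality holds.

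\emph{Step 3: maximality and non-canonicity.} It remains to show that in the tight case the clique $\A'$ is maximum and not canonical. For maximality I will invoke Proposition~\ref{TightArcsBIBD}: since $\D$ is not a projective plane, case~(b) applies, so $r=d'k-k+1=1+k(d'-1)=n'$, which is the Delsarte/EKR bound. For non-canonicity, suppose $\A'$ were the set of all blocks through a point $P$. Then every block of $\A'$ is incident with $P$, and distinct blocks through $P$ share no other point, so $P$ would be the only secant. It would carry all $n'$ blocks, giving $d'=n'=1+k(d'-1)$ and hence $(k-1)(d'-1)=0$. This is impossible, because $k\ge2$ and $d'\ge2$ by the definition of an arc.

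I expect the main subtlety to be Step~3. There one must be careful that ``maximal clique'' is read as ``maximum clique''; this is what Proposition~\ref{TightArcsBIBD}(b) delivers through the identity $r=1+k(d'-1)$. One must also make sure the degenerate projective-plane case~(a) is excluded by the standing assumption $v\ge k^2$, since there all blocks form a clique of size $b>r$. The counting in Steps~1--2 is routine once the ``at most one common point'' property is used to guarantee distinctness.
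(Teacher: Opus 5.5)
Your proof is correct and is essentially the paper's argument. The paper does the same double count globally: $\A'$ has $s=n'k/d'$ secants, and the structure is a design if and only if $\binom{n'}{2}=s\binom{d'}{2}$. You count locally around a fixed $Y\in\A'$ instead, which also gives the general inequality $n'\ge 1+k(d'-1)$.

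Your Step~3 goes slightly beyond the paper's proof, which leaves the claim that $\A'$ is a non-canonical maximal clique implicit. Deriving $n'=r$ from Proposition~\ref{TightArcsBIBD}(b), and stating explicitly that $\D$ must not be a projective plane, is exactly what that part needs.
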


\begin{proof}
By counting incidences, $\A'$ has $s=n'k/d'$ secants. The points of $\A'$
are the $n'$ exterior lines of $\A$. They make up a design if and only if
any two exterior lines intersect. This is clearly equivalent to
$$\binom{n'}{2} = s \cdot \binom{d'}{2},$$
which is in turn equivalent to $n'=1+k(d'-1)$.
\end{proof}

Notice that the maximum clique $Y$ in the block graph of $\D_{66}^{(i)}$
such that $\ml(Y)$ is a $2$-$(13,3,1)$ design is in fact
a tight dual $(13,3)$-arc $\A'$. The corresponding $(40,4)$-arc
$\A$ is the $2$-$(40,4,1)$ subdesign of $\D_{66}^{(i)}$.
We can generalise this example by assuming that a $2$-$(v,b,r,k,1)$
design $\D$ contains a tight dual $(n',d')$-arc $\A'$ of size $n'=r$.
Then, $Y=\A'$ is a non-canonical maximum clique in $X_{\D}$ such
that $\ml(Y)$ is a $2$-$(r,d',1)$ design. By
Proposition~\ref{TightArcsBIBD}~(b), $d'$ divides~$k$. Denoting
$s=d'$ and $t=k/d'$, we can write all other parameters in terms
of $s$ and $t$:
\begin{equation}\label{eqseries}
\begin{array}{r@{\,\,}c@{\,\,}l}
v & = & st(st(s-1)-s+2),\\[2mm]
b & = & (st(s-1)+1)(st(s-1)-s+2),\\[2mm]
r & = & n' = st(s-1)+1,\\[2mm]
k & = & st,\\[2mm]
n & = & t(s-1)^2(st-1),\\[2mm]
d & = & t(s-1).\\[2mm]
\end{array}
\end{equation}

The designs $\D_{66}^{(i)}$ are the special case $s=3$, $t=2$. Such designs
also exist for $s=2$; then, $\D$ is an affine plane of
order $2t$ with a maximal $(t (2 t - 1), t)$-arc. For $t=2^e$, we can
construct such an affine plane by taking a projective plane $\pi$ of
order $q=2^{e+1}$ with a hyperoval $\mathcal{H}'$ in the dual plane~$\pi'$
(e.g.\ $\pi=\PG(2,2^{e+1})$ and $\mathcal{H}'$ a non-degenerate dual
conic together with its nucleus). Take any line
$\ell_\infty \in \mathcal{H}'$ and let $\D=\pi\setminus \ell_\infty$. The
remaining lines $\A'=\mathcal{H}'\setminus \{\ell_\infty\}$ are a tight
dual arc in~$\D$, i.e.\ a non-canonical maximum clique in $X_\D$. We already
know that the block graphs of affine planes have many non-canonical
maximum cliques, but tight dual arcs are a very restricted subclass.
For example, the block graph of $\D=\AG(2,4)$ has $4^5=1024$ maximum
cliques, $16$ of which are canonical and $48$ of which are
tight dual arcs. Similarly, by a computation in GAP~\cite{GAP25}, the
block graph of $\D=\AG(2,8)$ has $8^9=134\,217\,728$ maximum cliques,
$64$ of which are canonical and $4480$ of which are tight
dual $(9,2)$-arcs. The corresponding maximal $(28,4)$-arcs are unitals
of order~$3$ isomorphic to the smallest Ree unital~\cite{HL66}. Interestingly,
it is known that this unital cannot be embedded in a projective plane
of order~$9$ \cite{AEB81, KG86}, but it can be embedded in
$\AG(2,8)$ and $\PG(2,8)$.

For $s\ge 3$, equations~\eqref{eqseries} give an infinite series of
parameters for Steiner $2$-designs with tight dual arcs as non-canonical
maximum cliques. These designs are only known to exist in the smallest
case $s=3$, $t=2$. The next few cases are
\begin{enumerate}[label=\arabic*.]
\item $s=3$, $t=3$: $\D$ is a $2$-$(153,9,1)$ design with a maximal $(96,6)$-arc
and tight dual $(19, 3)$-arc,
\item $s=4$, $t=2$: $\D$ is a $2$-$(176,8,1)$ design with a maximal $(126,6)$-arc
and tight dual $(25, 4)$-arc,
\item $s=5$, $t=2$: $\D$ is a $2$-$(370,10,1)$ design with a maximal $(288,8)$-arc
and tight dual $(41, 5)$-arc,
\item $s=4$, $t=3$: $\D$ is a $2$-$(408,12,1)$ design with a maximal $(297,9)$-arc
and tight dual $(37,4)$-arc.
\end{enumerate}
We have not succeeded to construct or rule out any of these designs. Indeed, in the
first three cases the existence of a design with parameters as~$\D$ is an open question,
with or without maximal arcs. In the fourth case, both $2$-$(408,12,1)$ and $2$-$(297,9,1)$
designs are unknown. In conclusion, Problem~\ref{prob:NonCanonicalFamiliesWithoutDesignStructure}
remains an interesting and presumably rather difficult open problem at the intersection
of EKR-Theory and Design Theory.

\section*{Acknowledgments}

Sam Adriaensen is supported by grant 12A3Y25N of the Research Foundation Flanders (FWO).
Sergey Goryainov is supported by the Natural Science Foundation of Hebei Province (A2023205045) and the 111 Center (Grant No.D26018).
Elena V.~Konstantinova is supported by the state contract of the Sobolev Institute of Mathematics, project No.~FWNF-2026-0011 ``Algebraic and combinatorial invariants of discrete structures''. Vedran Kr\v{c}adinac is supported by the European Union -- NextGenerationEU through
the National Recovery and Resilience Plan 2021-2026 Institutional grants of University
of Zagreb Faculty of Science (IK IA 1.1.3 Impact4Math).

\section*{Declaration of A.I.\ use}

Artificial intelligence has only been used to do some minor editing of the text.
All the content of this paper is due to the authors.

\end{document}